\newcolumntype{+}{>{\global\let\currentrowstyle\relax}}
\newcolumntype{^}{>{\currentrowstyle}}
\DeclareMathOperator{\argmin}{argmin}
\DeclareMathOperator{\prox}{prox}
\newtheorem{theorem}{Theorem}
\newtheorem{assumptions}{Assumption}
\newtheorem{assumption}[assumptions]{Assumption}
\newtheorem{lemma}{Lemma}
\newtheorem{corollary}{Corollary}
\def\xinit{x_0}
\def\x{{x}}
\def\F{{f}}
\def\G{{g}}
\def\d{{n}}
\def\dtheta{{d}}
\def\ddata{{m}}
\def\acronym{\texttt{ReTune}}
\def\gdot{{\tiny\bullet}}
\def\J{\mathrm{J}}
\def\JF{\mathrm{JF}}
\def\Id{\mathrm{Id}}
\def\NN{\mathbb{N}}
\title{Restarted contractive operators to learn at equilibrium}
\author{Leo Davy\thanks{L. Davy (\texttt{leo.davy@ens-lyon.fr}) and N. Pustelnik (\texttt{nelly.pustelnik@ens-lyon.fr}) are with Laboratoire de Physique de l'ENS Lyon, CNRS UMR 5672, F69007 Lyon, France. L. Davy is supported by DGA/AID (01D22020572) and IADoc@UdL (ANR-20-THIA-0007-01). N. Pustelnik  is supported by Fondation Simone et Cino Del Duca - Institut de France.}\and Luis M. Brice\~no-Arias\thanks{L. Brice\~no-Arias (\texttt{luis.briceno@usm.cl}) is with Universidad Técnica Federico Santa Maria,
    Departamento de Matem\'atica,
    8940897, San Joaquín, Santiago de Chile } \and Nelly Pustelnik$^*$}
\begin{document}

\maketitle

\begin{abstract}
Bilevel optimization offers a methodology to learn hyperparameters in imaging inverse problems, yet its integration with automatic differentiation techniques remains challenging. On the one hand, inverse problems are typically solved by iterating arbitrarily many times some elementary scheme which maps any point to the minimizer of an energy functional, known as equilibrium point. On the other hand, introducing parameters to be learned in the energy functional yield architectures very reminiscent of Neural Networks (NN) known as Unrolled NN and thus suggests the use of Automatic Differentiation (AD) techniques. Yet, applying AD requires for the NN to be of relatively small depth, thus making necessary to truncate an unrolled scheme to a finite number of iterations. First, we show that, at the minimizer, the optimal gradient descent  step computed in the Deep Equilibrium (DEQ) framework admits an approximation, known as Jacobian Free Backpropagation (JFB), that is much easier to compute and can be made arbitrarily good by controlling Lipschitz properties of the truncated unrolled scheme.  Second, we introduce an algorithm that combines a restart strategy with JFB computed by AD and we show that the learned steps can be made arbitrarily close to the optimal DEQ framework. Third, we complement the theoretical analysis by applying the proposed method to a variety of problems in imaging that progressively depart from the theoretical framework. In particular we show that this method is effective for training weights in weighted norms; stepsizes and regularization levels of Plug-and-Play schemes; and a DRUNet denoiser embedded in Forward-Backward iterates.
\end{abstract}


\section{Introduction}
Inverse problems in imaging have seen significant advancements over the past 20 years, evolving from estimation techniques based on the minimization of non-smooth, possibly non-convex problems to model-based neural network procedures (e.g., unrolled or Plug-and-play)  that combine traditional minimization techniques with powerful neural networks.

Despite the variety of methods available to solve inverse problems, a persistent challenge remains the accurate estimation of the underlying hyperparameters. This can be addressed through automatic differentiation when using standard neural networks. However, it demands special attention to combine the benefits of automatic differentiation when employed for parameter estimation of model-based neural networks. This particular challenge will be at the core of our contribution.

\noindent \textbf{Bilevel formulation --} The standard bilevel optimization formulation aims to estimate the \textit{optimal} set of  hyperparameters $\theta^*$ involved in an estimator $\widehat{x}_{\theta}$ leading to the optimal solution $\widehat{x}_{\theta^*}$. Formally, we aim to estimate
\begin{align}
    &\theta^* \in \textrm{Argmin}_{\theta} \mathcal{L}(\widehat{x}_\theta) \tag{Outer Problem}\label{eq:outer_problem} \\
    &\text{with }\quad \widehat{x}_\theta \in  \textrm{Argmin}_x E_\theta(x). \tag{Inner Problem}\label{eq:inner_problem}
\end{align}

In our contribution, we mainly focus on a specific form for the \ref{eq:inner_problem} energy that is
\begin{equation}\label{eq:min_with_theta_F_plus_G}
    E_\theta(x) = \F_\theta(x) + \G_\theta(x),
\end{equation}
where $\F_\theta$ and $\G_\theta$ are functions from $\mathbb{R}^\d$ to $]-\infty,+\infty]$. 
Many inverse problems in imaging can be framed in the aforementioned form, where $\F_\theta$  models a data-fidelity term, $\G_\theta$ a prior \cite{Combettes2011,Chambolle_A_2016_an,Pustelnik_N_20016_j-w-enc-eee_wav_bid}, and $\theta \in \mathbb{R}^{\dtheta}$ models hyperparameters involved in the definition of $\F_\theta$ or $\G_\theta$ such as the regularization parameter, a weight matrix, a linear operator, and so on. Many iterative schemes have been developed in order to minimize \eqref{eq:min_with_theta_F_plus_G} when no closed-form solution is available. Among these schemes, the proximal algorithms will be of particular interest to this contribution (see e.g. \cite{Livre1, Combettes2011, Chambolle_A_2016_an}).

The bilevel formulation aims to estimate  the underlying parameters $\theta$ automatically, rather than relying on manual tuning that becomes intractable when the dimension $\dtheta>1$.

\noindent \textbf{Specification to the inverse problem framework -- }In \eqref{eq:outer_problem}, $\mathcal{L}$ is a loss function used to fit the parameters $\theta$ involved in $\widehat{x}_\theta$. The most encountered framework is certainly supervised learning defined from a dataset $\mathcal{D}=\{(\overline{x}^{(i)}, y^{(i)}) \}_{i\in \mathbb{I}} \subset \mathbb{R}^\d\times\mathbb{R}^\ddata$ following some distribution.  When considering inverse problems in imaging,  $\overline{x}^{(i)}$ represents the ground-truth image and  $y^{(i)}$ the observation. The standard relation between $y^{(i)}$ and $\overline{x}^{(i)}$ is given by $y^{(i)} = \mathrm{A} \overline{x}^{(i)} + \epsilon$ where $\mathrm{A}$ models a linear operator and $\epsilon$ a random additive noise. The goal is then to obtain an estimator $\widehat{x}_\theta(y^{(i)})$ the closest from $\overline{x}^{(i)}$ by minimizing a loss, e.g.,
\begin{equation}\label{eq:mse}
    \mathcal{L}(\widehat{x}_\theta) = \sum_{i\in \mathbb{I}} ||\overline{x}^{(i)} - \widehat{x}_\theta(y^{(i)})||_2^2.
\end{equation}
In the following, we write $\widehat{x}_\theta$ instead of $\widehat{x}_\theta(y^{(i)})$, omitting the explicit dependence on $y^{(i)}$. $\mathcal{L}$ can also model an unsupervised loss. The reader could refer to \cite{ramani_monte-carlo_2008, tachella_unsure_2024} for more details about unsupervised settings. In the following, we assume that $\mathcal{L}$ does not depend directly on $\theta$.\\
\noindent \textbf{Limitations of bilevel formulation -- } The \ref{eq:outer_problem} relies on the estimation of $\widehat{x}_\theta$, which is obtained by a minimization strategy, typically a gradient descent method (or accelerated versions of it) leading to the following iterations:
\begin{equation}\label{eq:gradient_descent_nn}
    \theta^{[\ell + 1]} = \theta^{[\ell]} - \tau \partial_\theta \mathcal{L}(\widehat{x}_{\bullet})(\theta^{[\ell]}),
\end{equation}
where $\partial_{\theta}$ is the limiting subdifferential (see Section~\ref{sec:subdiff}) and $\partial_\theta \mathcal{L}(\widehat{x}_{\bullet})(\theta^{[\ell]})$ is known as the hypergradient.

However, in practice a minimizer of \eqref{eq:inner_problem} is  never computed exactly and is usually approximated by $K$ iterations of an algorithm allowing to minimize \eqref{eq:min_with_theta_F_plus_G}. Formally, 
\begin{equation}\label{eq:def_Phi_K_approx}
    \widehat{x}_\theta \approx  \widetilde{x}_{\theta,K} = \varphi_K (\cdot,\theta) \circ \dots \circ \varphi_1 (\xinit,\theta),
\end{equation}
where $\xinit\in \mathbb{R}^{\d}$ and $\varphi_k$ denotes the building block of the iterative scheme (see Section~\ref{sec:convergent_algorithms}). 

 
 In this work, we develop the theoretical framework needed to solve  the bilevel optimization problem, in a context where only a \emph{truncated unrolled} scheme - as defined in \eqref{eq:def_Phi_K_approx} - is available. Here the number of iterations $K$ is assumed to be fixed and relatively small, making it possible to apply \textit{automatic differentiation} (AD) techniques \cite{griewank_automatic_1989,bolte_nonsmooth_2021}.

\noindent \textbf{State-of-the-art -- } Many lines of research have been considered for bilevel optimization. A generic and popular framework is given by the Deep-Equilibrium (DEQ) formalism \cite{bai_deep_2019, gilton_deep_2021,crockett_bilevel_2022}. This formalism, recalled in Section~\ref{sec:subdiff}, allows to derive an expression of the hypergradient by leveraging a fixed-point problem associated to \eqref{eq:min_with_theta_F_plus_G}. However, its implementation can be challenging and costly, primarily due to the necessity to reach $\widehat{x}_\theta$ and, secondly, to the need of inverting  a high-dimensional Jacobian matrix. A simplified DEQ framework known as Jacobian-Free Backpropagation (JFB) has been proposed in \cite{fung_jfb_2022} and its efficiency on practical problems has made of JFB a popular alternative to DEQ. In the context of inverse problems in imaging, DEQ framework has been smartly employed in \cite{gilton_deep_2021, zou_kamilov_deq_sigproc_2023}.

Another class of methods for bilevel optimization, known as iterative differentiation, alternates optimization steps for the inner problem with gradient-based updates for the outer problem \cite{deledalle_stein_2014, pascal_automated_2021, suonpera_valkonen_linearly_2024, suonpera_valkonen_single-loop_nodate, bertrand_implicit_2020, bertrand_implicit_2022, pouliquen_implicit_2023}. Provided that the iterations of the inner minimization algorithm satisfy certain smoothness conditions, this strategy enables access to a sequence of  $\partial_{\theta} \varphi(\cdot, \theta)$ through the application of the chain rule \cite{bertrand_implicit_2022}. However, its practical implementation, often non-trivial, requires careful hyperparameter tuning, and is strongly problem-dependent.  The effect of inexact solutions in both the inner and outer optimization problems has been further analyzed in \cite{bogensperger_adaptively_2025}.

We also note the works \cite{jiu_deep_2021, le_faster_2022, nguyen_map-informed_2023, savanier_deep_2023}, which focus on directly learning the parameters of the truncated unrolled scheme by AD. However, these approaches generally come with limited theoretical guarantees concerning the output and its connection to an underlying minimization problem.

\noindent \textbf{Contributions and organization of the paper } -- In this brief introduction, we have introduced several important concepts involved in bilevel formalism for inverse problem solving, namely: \textit{truncated unrolled}, Deep-Equilibrium, and Jacobian Free Backpropagation. In the present contribution, we propose a new algorithm named \acronym, that stands for \textit{Restart Truncated Unrolled}, which offers a more straightforward implementation than DEQ to perform an accurate estimation of $\partial_\theta \mathcal{L}(\widehat{x}_{\bullet})$. Our approach ensures that JFB, as introduced in \cite{fung_jfb_2022}, is effectively integrated within the context of \textit{truncated unrolled} models, thereby establishing a connection between DEQ and \textit{truncated unrolled} learning. 

 In Section~\ref{sec:basics_of_optim}, we review several fundamental results and definitions from optimization theory  allowing us to solve \eqref{eq:min_with_theta_F_plus_G} for fixed parameters. Particular attention is given to the notion of Lipschitz continuity, which plays a key role in establishing convergence guarantees for iterative methods. We also introduce the concept of the truncated unrolled algorithm. Finally, both theoretical and practical aspects of bilevel optimization will be discussed. 
 
In Section~\ref{sec:section3}, we introduce the \acronym\ algorithm, which consists of restarting (power-iterates of) truncated unrolled scheme and perform AD over the final restarted step. Under technical assumptions, we prove that \acronym\ converges to a solution of \eqref{eq:min_with_theta_F_plus_G}. We then study the relationship between DEQ, JFB, and \acronym\ by stating and proving bounds that quantify the error in the gradient steps across these different frameworks. 




In Section~\ref{sec:section4}, we numerically assess the efficiency of \acronym\ on two types of experimental settings. The first one focuses on a denoising scenario that satisfies part of the technical assumptions provided in Section~\ref{sec:section3}. The second one departs from this ideal theoretical framework to explore more challenging image restoration problems involving denoisers defined by neural networks. 


\section{Basics of optimization and learning}\label{sec:basics_of_optim}
\subsection{From convergent algorithms to unrolled neural networks}\label{sec:convergent_algorithms}
\textbf{Convergent algorithms -- }We consider minimization problems of the form:
\begin{equation}\label{eq:min_F_plus_G}
    \widehat{x} \in \mathrm{Argmin}_{x\in\mathbb{R}^\d} \F(x) + \G(x),
\end{equation}
where $f\colon \mathbb{R}^\d\to\mathbb{R}$ is convex differentiable with 
$L$-Lipschitz continuous gradient, $g\in \Gamma_0(\mathbb{R}^\d)$,
and we assume existence of solutions.
In general, \eqref{eq:min_F_plus_G} does not admit closed-form solutions and solving it amounts to construct a sequence of approximate solutions $(x_k)_{k\in \mathbb{N}}$ that converges to $\widehat{x}$. For instance, \eqref{eq:min_F_plus_G} can be solved via
the forward-backward splitting (or proximal gradient), which is defined by the Picard iteration for $x_0\in\mathbb{R}^\d$ through
\begin{equation}
\label{e:Picard}
(\forall k\in\NN)\quad x_{k+1}:=\varphi_{k+1}(x_k),    
\end{equation}
where
\begin{align}
 (\forall k\in\NN)\quad   \varphi_k &:= \prox_{\tau_kg} \circ (\mathrm{I}-\tau_k\nabla f),
\end{align}
$\tau_k>0$ is the stepsize,   
$\nabla f$ is the gradient of $f$, and $\prox_g\colon x\mapsto \argmin_{y\in\mathbb{R}^\d}\big(g(y)+\|y-x\|^2/2\big)$ is the proximity operator of $g$. We have
\begin{equation}
\label{e:fixedpoint}
 \mathrm{Argmin}_{x\in\mathbb{R}^\d} \F(x) + \G(x) = \bigcap_{k\in\NN}{\rm fix}\,\varphi_k,    
\end{equation}
where ${\rm fix}\,\varphi:=\{x\in\mathbb{R}^\d\,:\,x=\varphi (x)\}$.
Moreover, under the assumption 
$0<\inf_{k\in\NN}\tau_k\le\sup_{k\in\NN}\tau_k<2/L$, the convergence of the sequence generated by the algorithm to a solution to \eqref{eq:min_F_plus_G} is guaranteed \cite{Livre1}.
If $f$ is $\mu$-strongly convex, \eqref{eq:min_F_plus_G}~has a a unique solution $\widehat{x}$ and the operator $\varphi_k$ turns out to be 
$\omega(\tau_k)$-Lipschitz continuous \cite{TaylorHen}, where
$$
(\forall \tau\in]0,2/L[)\quad \omega(\tau)=\max\{|1-\tau \mu|,|1-\tau L|\}\in\left]0,1\right[.
$$
In this strongly convex setting, the optimal step-size and rate are $\tau_k\equiv\tau^*=2/(\mu+L)$ and $\omega^*=\omega(\tau^*)=(L-\mu)/(L+\mu)$, 
respectively. Under strong convexity assumptions on $g$, the Lipschitz constant can be improved and alternative 
Lipschitz constants are also obtained for various algorithms as Peaceman-Rachford, Douglas-Rachford, and Gradient descent under strong convexity assumptions. The reader is referred to \cite{Sigpro1} for more details. 

In a more general context, depending on the assumptions on the functions $f$ and $g$, several algorithms generate
a sequence $\{x_k\}_{k\in\NN}$ using the \textit{elementary steps} in the recurrence 
\eqref{e:Picard} with a particular choice  $\varphi_k\colon\mathbb{R}^\d\to\mathbb{R}^\d$ satisfying \eqref{e:fixedpoint}. In the following, we denote $\Phi_K\colon\mathbb{R}^\d\to\mathbb{R}^\d$ the composition of $K$ elementary steps  allowing to construct $\widetilde{x}_K$ from an initialization $\xinit$. More precisely,
\begin{equation}\label{eq:def_Phi_K}
 (\forall K\in\NN)\qquad  \widetilde{x}_K = \Phi_K(\xinit) = \varphi_K \circ \dots \circ \varphi_1 (\xinit),
\end{equation}
It is easy to verify from \eqref{e:fixedpoint} that, if, for every $k\in\NN$, $\varphi_k$ is $\omega_k$-Lipschitz continuous with some $\omega_k>0$, then 
\begin{equation}\label{eq:phi_k_leq_delta_k}
 (\forall \xinit \in \mathbb{R}^\d) \qquad ||\Phi_K(\xinit) - \widehat{x}|| \leq \delta_K ||\xinit - \widehat{x}||, 
\end{equation}
where $\delta_K\le\prod_{k=1}^K\omega_k$. If $\delta_K\in\left]0,1\right[$, the sequence generated by the recurrence \eqref{e:Picard} with function $\Phi_K$ converges to $\widehat{x}$ with a linear convergence rate of $\delta_K$ in view of the Banach-Picard theorem~\cite[Theorem 1.50]{Livre1}.


%

\noindent\textbf{Unrolled proximal neural network -- } A very popular approach to create parameterized estimates is to obtain them through a Neural Network (NN). Formally, a NN reads
\begin{equation}\label{eq:neural_network}
    \widetilde{x}_{\theta,K} = \sigma_K(W_K\dots (\sigma_1(W_1 y + b_1))\dots+b_k)
\end{equation}
with non-linear (activation) functions $\sigma_k$, linear maps $W_k$, and bias $b_k$. The underlying hyperparameters are typically the weights and the bias i.e. $\theta= \{W_1,\ldots, W_K,b_1,\ldots b_K\}$. 
Designing a good NN architecture remains challenging since it has a large impact on performance and stability, both at training and at inference.  In standard NN formulation,  $\widetilde{x}_{\theta,K}$  does not result from any fixed point architecture. Unrolled  NN have been proposed as an alternative  to standard NN to incorporate model knowledge. Formally, starting from \eqref{eq:def_Phi_K}, an unrolled NN reads
\begin{equation}
\label{eq:Tune}
    \widetilde{x}_{\theta,K} = \Phi(\xinit,\theta) =  \varphi_K (\cdot,\theta) \circ \dots \circ \varphi_1 (\xinit,\theta).
\end{equation}
where $\varphi_k$ is constructed from fixed point analysis and contains weights matrix, bias, and non-linearities (e.g. LISTA \cite{lecun_lista}) so that an unrolled NN exhibits the same structure as \eqref{eq:neural_network}. This method has been shown to be effective on a variety of problems in image processing but often lack convergence guarantees. 

The numerical necessity of choosing $K$ small means that \eqref{eq:Tune} does not exactly solve the \ref{eq:inner_problem}. In order to emphasize the truncated aspect of the scheme, we will refer to \eqref{eq:Tune} as the  \textit{truncated unrolled} procedure over $K$ steps. Our contribution is to introduce an algorithmic scheme involving \textit{truncated unrolled} iterations that comes with convergence guarantees.

\subsection{From theoretical bilevel to practical bilevel}\label{sec:subdiff}
This section focuses on the second key element in  bilevel optimization, namely the definition of the hypergradient $\partial_\theta \mathcal{L}(\widehat{x}_{\bullet})(\theta^{[\ell]})$ in \eqref{eq:gradient_descent_nn}.

\noindent \textbf{Recall about subdifferentiality -- }For every proper and lower semicontinuous function $f : \mathbb{R}^\d \to (-\infty, +\infty]$, we define:
\begin{itemize}
  \item[(i)] the \emph{Fréchet subdifferential} of $f$ at $x\in \mathrm{dom}\, f$, written $\widehat{\partial} f(x)$, as the set of all vectors $u \in \mathbb{R}^\d$ satisfying
  \[
  \liminf_{\substack{y \neq x \\ y \to x}} \frac{f(y) - f(x) - \langle u, y - x \rangle}{\|y - x\|} \geq 0.
  \]
  When $x \notin \mathrm{dom}\, f$, we set $\widehat{\partial} f(x) = \emptyset$.
  
  \item[(ii)] The \emph{limiting-subdifferential}~\cite{Boris}, or simply the subdifferential, of $f$ at $x \in \mathbb{R}^\d$, written $\partial f(x)$, is defined through the following closure process:
  \[
  \partial f(x) := \left\{ u \in \mathbb{R}^\d : \exists x^k \to x,\ f(x^k) \to f(x)\ \text{and}\ \widehat{\partial} f(x^k) \ni u^k \to u\right\}.
  \]
\end{itemize}
We also denote by $\partial_{x_i} f(x)$ to the limiting subdifferential of the function $\xi\mapsto f(x_1,\ldots, x_{i-1},\xi,x_{i+1},\ldots, x_\d)$. In the case of
vector-valued locally Lipschitz continuous functions $F\colon\mathbb{R}^\d\to \mathbb{R}^N$, the Rademacher's theorem asserts that $F$ is almost everywhere differentiable and we denote by $\Omega_F$ the negligible set of points at which $F$ is not differentiable. The generalized Jacobian is defined by
$$\partial F\colon x\mapsto {\rm co}\left\{\lim_{x_k\to x}{\rm J}F(x_k)\,|\, \{x_k\}_{k\in\NN}\subset\mathbb{R}^\d\setminus\Omega_F\right\},$$
where ${\rm co}$ stands for the convex hull, ${\rm J}F(x)\in\mathbb{R}^{N\times \d}$ is the classical Jacobian matrix in $\x\notin \Omega_F$, and the space of $N\times \d$
real matrices is endowed with the norm $\|\cdot\|_2$.
It follows from \cite[Proposition~2.6.2]{Clark} and\footnote{$\|\cdot\|_F$ stands for the Frobenius norm.} $\|\cdot\|_2\le\|\cdot\|_F$ that 
if $F$ is $\omega$-Lipschitz at $x\in\mathbb{R}^\d$, for some $\omega >0$, we have, for every  
$g\in \partial F(x)$,  $\|g\|_2\le \omega$. Furthermore, we denote $||\partial F(x)||_2 = \sup_{g\in\partial F(x)} ||g||_2$.

\noindent \textbf{Outer Problem minimization -- } Solving the \ref{eq:outer_problem} relies on the gradient descent step
\begin{equation}\label{eq:gradient_update}
    \theta^{[\ell+1]} = \theta^{[\ell]} - \tau g(\theta^{[\ell]}),
\end{equation}
where 
\begin{equation}
\label{eq:gradient_descent}
   (\forall \theta\in \Theta) \qquad    g(\theta) \in \partial_\theta (\mathcal{L}\circ\widehat{x}_{\gdot})(\theta) = \partial_x \mathcal{L}(\widehat{x}_{\theta})^\top\partial_\theta \widehat{x}_{\theta}.
\end{equation}
Since we assume that $\mathcal{L}$ does not depend directly on $\theta$,  the term $\partial_x\mathcal{L}$ is simple to compute in several cases. For instance, in the case when the $\mathcal{L}\colon x\mapsto \|x-\overline{x}\|^2/2$, it is differentiable and
$\partial_x\mathcal{L}\colon x\mapsto x-\overline{x}$.
On the other hand, the evaluation of $\partial_\theta \widehat{x}_\theta$ is more challenging, since it involves the solution of a parametric optimization problem.
Below we discuss how to approximate this subgradient. \\


\noindent \textbf{Deep Equilibrium framework} -- Deep Equilibrium \cite{bai_deep_2019,gilton_deep_2021} is a framework which aims at obtaining an expression of  $\partial_\theta \widehat{x}_\theta$ in the specific case where $ \widehat{x}_\theta$ is the fixed fixed point of an operator $\Phi_K(\cdot,\theta)$, i.e., 
\begin{equation}\label{eq:fixed_point_phi_theta}
    \widehat{x}_\theta = \Phi_K(\widehat{x}_\theta, \theta).
\end{equation}
This formulation is of particular interest for inner problems of the form \eqref{eq:min_F_plus_G} solved by fixed point iterations algorithms as described in Section~\ref{sec:basics_of_optim}.

Taking derivatives w.r.t. $\theta$ of \eqref{eq:fixed_point_phi_theta} yields
\begin{align}
    \partial_\theta \widehat{x}_\theta = [\partial_x \Phi_K(\gdot, \theta)(\widehat{x}_\theta)]\partial_\theta \widehat{x}_\theta + \partial_\theta \Phi_K (\widehat{x}_\theta,\gdot)(\theta).
\end{align}
Rearranging the terms we obtain
\begin{equation}
    [\Id-\partial_x \Phi_K(\gdot, \theta)(\widehat{x}_\theta)]\partial_\theta\widehat{x}_\theta = \partial_\theta \Phi_K (\widehat{x}_\theta,\gdot)(\theta)
\end{equation}
and provided the inversion can be performed we have
\begin{equation}
    \partial_\theta \widehat{x}_\theta = [\Id-\partial_x \Phi_K(\gdot, \theta)(\widehat{x}_\theta)]^{-1}\partial_\theta \Phi_K(\widehat{x}_\theta,\cdot)(\theta).
\end{equation}
So if we  minimize a function $\mathcal{L}(\widehat{x}_\theta)$, then the hypergradient defined in \eqref{eq:gradient_descent} is given by
\begin{align}
    g(\theta) &= \partial_x \mathcal{L}(\widehat{x}_\theta)^\top\partial_\theta \widehat{x}_\theta \label{eq:step_deq}\\ &= \partial_x \mathcal{L}(\widehat{x}_\theta)^\top[\Id-\partial_x \Phi_K(\cdot, \theta)(\widehat{x}_\theta)]^{-1}\partial_\theta \Phi_K(\widehat{x}_\theta,\cdot)(\theta).\label{eq:step_deq2}
\end{align}


\noindent \textbf{Computation of a DEQ gradient step} -- 
From the above discussion, we conclude that the appropriate hypergradient for learning the hyperparameters involved in an algorithm defined by the fixed-point equation $\widehat{x}_\theta = \Phi_K(\widehat{x}_\theta,\theta)$ is given by equation \eqref{eq:step_deq2}. In practice, this operation requires three successive steps:
\begin{enumerate}
    \item Finding $\widehat{x}_\theta$ such that $\widehat{x}_\theta = \Phi_K(\widehat{x}_\theta, \theta)$.
    \item Computing the Jacobian $\partial_x \Phi_K(\cdot, \theta)(\widehat{x}_\theta)$.
    \item Inverting the matrix 
    \begin{equation}
    \label{eq:jtheta}
        \J_\theta = [\Id-\partial_x \Phi_K(\cdot, \theta)(\widehat{x}_\theta)].
    \end{equation}
\end{enumerate}
Regarding \textbf{1.}, any fixed-point solver can be considered for $\Phi_K(\cdot, \theta)$. {Computing the Jacobian} in \textbf{2.} can be performed by \textit{automatic differentation}, yet it requires that the algorithm is not too deep.  It is important to note that if $\mathcal{H}$ is a real Hilbert space of dimension $\d$, then the Jacobian $\partial_x \Phi_K(\cdot, \theta)(\widehat{x}_\theta)$ is a matrix of size $\d^2$ which can be challenging for many applications such as encountered in image reconstruction and may restrict its computation for high-dimensional data. Finally, inverting $\J_\theta$ in \textbf{3.}  is a difficult problem due to dimensionality.  The most frequently used procedure is by approximating the Neumann inversion formula 
\begin{equation}\label{eq:neumann_j_theta}
    \mathrm{I}_P= \sum_{p=0}^P [\partial_x \Phi_K(\cdot, \theta)(\widehat{x}_\theta)]^p 
\end{equation}
which benefits from simpler operations (i.e. matrix-matrix multiplications) and is known to converge as $\lim_{P\to\infty}\mathrm{I}_P= \J_\theta^{-1}$.\\

\noindent\textbf{Truncated iterations} -- The standard theory of restart aims at estimating $\widehat{x}_\theta$, which in practice is approximated by the output after $K$ iterations of an elementary scheme  $\widetilde{x}_{\theta,K} = \Phi_K(x_0,\theta)$. The gradient step to solve the \ref{eq:outer_problem} thus reads
\begin{align}
\label{eq:gradient_descent_K}
   (\forall \theta\in \Theta) \qquad    g^K(\theta) &= \partial_\theta (\mathcal{L}\circ\Phi_K(x_0,\gdot))(\theta) \nonumber \\
   &= \partial_x \mathcal{L}(\Phi_K(x_0,\theta))^\top\partial_\theta \Phi_K(x_0,\gdot)(\theta).
\end{align}
where the $\partial_x$-term has often a closed form expression while the $\partial_\theta$-term is computed by automatic differentiation. The larger $K$ is, the closer from $\widehat{x}_\theta$ will be the solution but $K$ can not be selected too large due to computational budget issues.  

In this work, we explore a hybrid strategy in between DEQ and truncated iterates allowing to perform bilevel estimations even when truncated iterates are involved in $\Phi_K$. 

\section{Proposed Restart Jacobian Free algorithm}\label{sec:section3}

The algorithm we propose has been developed on the following observations: (i) The bilevel theoretical framework is always approximate, as $\widehat{x}_\theta$ is approached by $\widetilde{x}_{\theta, K}$ through a finite number of $K$-iterations (except for closed-form expression, which is out of the scope of this work), (ii) The theoretical framework such as DEQ relies on the knowledge of $\widehat{x}_\theta$, (iii) The cost of $\mathrm{J}_\theta^{-1}$ defined in \eqref{eq:jtheta} is prohibitive, (iv) Numerical backpropagation and automatic differentiation methods can only be implemented for $K$ "not too large" (i.e. $K\lesssim 10^2$).


In this work, we aim to benefit from properties of contractive algorithms to develop a bilevel procedure that does not require Jacobian inversion. We will theoretically assess that $\mathrm{J}_\theta^{-1} \approx \mathrm{Id}$ through contraction properties of the $\Phi_K(\cdot,\theta)$ operator with $K$ not too large - thus obtaining a feasible Jacobian free back-propagation algorithm.

\subsection{Algorithmic scheme}

Our algorithmic sheme relies on applying $T$ times a contractive operator $\Phi_{K}(\cdot,\theta)$, thereby defining a $T$-times restarted algorithm  as follows:
\begin{equation}\label{eq:restart}
   (\forall x_0\in \mathbb{R}^\d)\qquad   \Phi_{K}^T(x_0,\theta) := \underbrace{\Phi_{K}(\cdot,\theta) \circ \dots \circ \Phi_{K}}_{T \text{ times}} (x_0,\theta).
\end{equation}
Benefiting from this restart architecture, we can create an algorithmic procedure for estimating hyperparameters, where the partial derivative with respect to $\theta$ is computed only for the final restarted step of  $\Phi_{K}(\cdot,\theta)$. This procedure is described in Algorithm~\ref{alg:restarted_algorithm} and named \acronym.

\begin{algorithm}
\caption{\acronym: \textbf{Re}started \textbf{T}runcated \textbf{un}roll\textbf{e}d}\label{alg:restarted_algorithm}
\begin{algorithmic}
\Require Set $K,T > 0$ and initialize $\theta^{[0]}$.\\
$\begin{array}{l}
    \text{For } \ell = 0,  \ldots, L \\
    \left\lfloor 
    \begin{array}{l}  
    \mbox{Initialize $x_0^{[\ell]}$}.\\
     \text{For } t=1,\dots,T-1 \\
    \left\lfloor 
    \begin{array}{l}  
     x_{Kt}^{[\ell]} = \Phi_{ K}(x^{[\ell]}_{K(t-1)}, \theta^{[\ell]})
	\end{array}
    \right.\\[.4PC]
          x_{KT}^{[\ell]} = \Phi_{ K}(x_{K(T-1)}^{[\ell]}, \theta^{[\ell]})\\
    \theta^{[\ell+1]} = \theta^{[\ell]} - \eta (\partial_x \mathcal{L}(x_{KT}^{[\ell]}) )^\top \partial_\theta \Phi_{K}(x_{K(T-1)}^{[\ell]},\cdot) (\theta^{[\ell]})
	\end{array}
    \right. \\
    \text{Output: } \;\; \widetilde{x}_{\theta^{[L]},K}^T = x_{KT}^{[L]}
\end{array}$
\end{algorithmic}
\end{algorithm}%
In the proposed procedure the gradient step takes the form:
\begin{align}\label{eq:gradient_descentr}
    (\forall \theta \in \Theta) \qquad g^{\mathrm{R}}(\theta)= (\partial_x \mathcal{L}(x_{KT}) )^\top \partial_\theta \Phi_{K}(x_{K(T-1)},\cdot) (\theta)
\end{align}
where $K$ is chosen small enough to be able to perform the back-propagation procedure evaluating $\partial_\theta \Phi_{K}(x_{K(T-1)},\cdot) (\theta)$, with $x_{K t} = \Phi_{K}^t(x_0,\theta) $.

\subsection{Theoretical analysis}  

The goal of this section is twofold: establish the convergence of the sequence $(x_k)_{k\in \mathbb{N}}$ to $\widehat{x}_{\theta^{[\ell]}}$ the solution of  \eqref{eq:inner_problem} for every $\theta^{[\ell]}$, and second, to quantify the approximation error between the evaluation of the true gradient $g(\theta)$, as defined in \eqref{eq:gradient_descent},  the gradient evaluated at the optimum $\widehat{x}_\theta$, and the approximate gradient $g^R(\theta)$ considered in \acronym.
\subsubsection{Convergence of the sequence for the Inner Problem} \label{sec:convergence_inner_pb}
We recall that, for a fixed $\theta\in\Theta \subset \mathbb{R}^d$, the Inner Problem is defined by 
\begin{equation}
\label{e:inner2}
\widehat{x}_{\theta}\in\arg\min_{x\in\mathbb{R}^{\d}}f_{\theta}(x)+g_{\theta}(x).
\end{equation}

\begin{assumptions}\label{assumptions1}
For every $\theta\in\Theta$, the following hold:
    \begin{enumerate}
        \item $\Phi_K(\cdot,\theta)$ is $\delta_K(\theta)$-Lipschitz with $\delta_K(\theta)< 1$. 
        \item $\{\widehat{x}_{\theta}\}={\rm fix}\, \Phi_K(\cdot,\theta).$
    \end{enumerate}
\end{assumptions}
This assumption hold, for instance, if there exists an algorithm with elementary steps defined by
Lipschitz operators $\{\varphi_k(\cdot,\theta)\}_{k\in\NN}$ satisfying
\begin{equation}
\label{e:fixeduni}
    \{\widehat{x}_{\theta}\}=\bigcap_{k\in\NN}{\rm fix}\,\varphi_k(\cdot,\theta),
\end{equation}
and the unrolled operator over $K$ steps is
$\Phi_K(\cdot,\theta)=\varphi_K(\cdot,\theta)\circ\cdots\circ\varphi_1(\cdot,\theta)$.
Indeed, if, for every step $k\in\NN$, the operator $\varphi_k(\cdot,\theta)$ defining the single step algorithm is $\omega_k(\theta)$-Lipschitz for some $\omega_k(\theta)>0$ and $\delta_K(\theta)=\prod_{k=1}^K\omega_k(\theta)<1$, then
we have, for every $x$ and $y$ in $\mathbb{R}^d$,
$$\|\Phi_K(x,\theta)-\Phi_K(y,\theta)\|\le\omega_K\|\Phi_{K-1}(x,\theta)-\Phi_{K-1}(y,\theta)\|\le \delta_K\|x-y\|.$$
The second assumption is valid since \eqref{e:fixeduni} implies $\widehat{x}_{\theta}=\varphi_1(\widehat{x}_{\theta},\theta)$ and, hence, $\widehat{x}_{\theta}=\varphi_2(\widehat{x}_{\theta},\theta)=\varphi_2(\varphi_1(\widehat{x}_{\theta},\theta),\theta)$.
By recurrence, we obtain $\widehat{x}_{\theta}=\Phi_K(\widehat{x}_{\theta},\theta)$, which is the unique fixed point 
since $\Phi_K(\cdot,\theta)$ is $\delta_K(\theta)$-Lipschitz with $\delta_K(\theta)<1$.
In particular, these assumptions are satisfied by several first-order algorithms under strong convexity of $f_{\theta}$ and/or $g_{\theta}$. The following convergence result follows from \cite[Theorem~1.50]{Livre1}.
\begin{theorem}\label{th:banach_picard}
Suppose that $\Phi_K$ satisfies Assumption~\ref{assumptions1}. Then, for every $\theta\in\Theta$ and $x_0\in\mathbb{R}^n$,
$$(\forall t\in\NN)\quad \|x_{Kt}-\widehat{x}_{\theta}\|\le \delta_K(\theta)^t\|x_0-\widehat{x}_{\theta}\|,$$
where, for every $t\in\NN$, $x_{Kt}=\Phi_K(x_{K(t-1)},\theta)$.
Hence, $x_{Kt}\to \widehat{x}_{\theta}$ as $t\to+\infty$.
\end{theorem}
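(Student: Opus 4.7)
The statement is essentially an instance of the Banach--Picard fixed point theorem applied to the operator $\Phi_K(\cdot,\theta)$, so the plan is straightforward. I would fix $\theta\in\Theta$ and $x_0\in\mathbb{R}^n$, and first invoke Assumption~\ref{assumptions1}(2) to identify $\widehat{x}_{\theta}$ as a fixed point, i.e.\ $\Phi_K(\widehat{x}_{\theta},\theta)=\widehat{x}_{\theta}$. This turns the recursion $x_{Kt}=\Phi_K(x_{K(t-1)},\theta)$ into a one-step error bound once I apply the Lipschitz property from Assumption~\ref{assumptions1}(1):
\begin{equation*}
\|x_{Kt}-\widehat{x}_{\theta}\|=\|\Phi_K(x_{K(t-1)},\theta)-\Phi_K(\widehat{x}_{\theta},\theta)\|\le \delta_K(\theta)\|x_{K(t-1)}-\widehat{x}_{\theta}\|.
\end{equation*}

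Then I would conclude the geometric decay by a trivial induction on $t\in\NN$: the base case $t=0$ is an equality, and the inductive step follows by chaining the above inequality with the induction hypothesis, yielding $\|x_{Kt}-\widehat{x}_{\theta}\|\le \delta_K(\theta)^t\|x_0-\widehat{x}_{\theta}\|$. The convergence $x_{Kt}\to \widehat{x}_{\theta}$ as $t\to+\infty$ is then immediate from $\delta_K(\theta)\in[0,1[$, which forces $\delta_K(\theta)^t\to 0$.

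There is essentially no obstacle here: the only content of the result is the combination of (i) the contraction hypothesis on $\Phi_K(\cdot,\theta)$ and (ii) the fact that $\widehat{x}_{\theta}$ is the (necessarily unique) fixed point. The discussion immediately preceding the theorem already justifies why Assumption~\ref{assumptions1} is reasonable (namely, the product of per-step Lipschitz constants of $\varphi_k(\cdot,\theta)$ gives a contraction factor, and fixed-point uniqueness propagates through the composition), so in the proof I would simply cite \cite[Theorem~1.50]{Livre1} for the final convergence conclusion, or equivalently spell out the three-line induction above. No delicate estimate or nontrivial regularity argument is needed.
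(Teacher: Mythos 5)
Your argument is correct and coincides with the paper's approach: the paper simply cites the Banach--Picard theorem \cite[Theorem~1.50]{Livre1}, and your three-line induction using Assumption~\ref{assumptions1}(2) to identify $\widehat{x}_{\theta}$ as the fixed point and Assumption~\ref{assumptions1}(1) for the contraction step is exactly the content of that result.
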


In the next section, we justify why the Jacobian Free Propagation assumption can be applied in such a context (i.e., $\mathrm{J}_\theta^{-1} = \mathrm{Id}$) and we evaluate the approximation error between the exact bilevel procedure and the proposed restarted Jacobian free algorithm \acronym.

We have thus shown that in some settings \eqref{eq:min_F_plus_G} can be solved by taking a limit in number of restarts $T$ instead of a limit in the number of elementary steps $\varphi_k$. This is relevant for practical reasons since it means that one can fix the architecture of an algorithm $\Phi_{K}$ and that it can still exactly solve \eqref{eq:min_F_plus_G} provided sufficiently many restarts (power-iterates) are performed.\\

\subsubsection{Error on gradient step} 
To facilitate the error quantification between $g(\theta)$, the true gradient defined in~\eqref{eq:gradient_descent}, and $g^R(\theta)$, the approximate gradient obtained from Algorithm~\ref{alg:restarted_algorithm}, we introduce a third type of gradient step used in JFB \cite{fung_jfb_2022}, whose expression is
\begin{equation}\label{eq:gradient_descent_jf}
    g^{\JF}(\theta) = \partial_x \mathcal{L}(\widehat{x}_\theta)^\top \partial_\theta \Phi_K(\widehat{x}_\theta,\cdot)(\theta).
\end{equation}
We start by providing three different equivalent interpretations of \eqref{eq:gradient_descent_jf}:
\begin{enumerate}
    \item It stands for  \eqref{eq:step_deq} when $\J_\theta^{-1} = \Id$;
    \item The Neumann inversion \eqref{eq:neumann_j_theta} is truncated at $P=0$, leading to $\J_\theta^{-1} = \Id$;
    \item In  \eqref{eq:step_deq2}, $\partial_x\Phi_K(\cdot, \theta)(\widehat{x}_\theta) = 0$;
\end{enumerate}
The first two statements are at the core of the original JFB paper \cite{fung_jfb_2022} and reach the conclusion that this allows in many scenarios to obtain a training performance usually higher and faster than DEQ with Neumann inversion. In what follows we will justify that the third statement can be almost satisfied in our setting. 


The following result allows us to quantify the error between the exact gradient $g$ defined in \eqref{eq:step_deq2} and the JFB gradient  $g^{\mathrm{JF}}$ defined in \eqref{eq:gradient_descent_jf}, both involving the exact knowledge of $\widehat{x}_\theta$.

\begin{lemma}\label{th:prop_jf} If $\mathcal{L}$ is differentiable and Assumption~\ref{assumptions1} holds, then
    \begin{equation}
        ||g(\theta) - g^{\mathrm{JF}}(\theta)||_2\leq \frac{\delta_K(\theta)}{1-\delta_K(\theta)}||\partial_x \mathcal{L}(\widehat{x}_{\theta})||_2 ||\partial_\theta \Phi_{K} (\widehat{x}_{\theta},\cdot)(\theta)||_2.
    \end{equation}
\end{lemma}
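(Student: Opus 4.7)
The plan is to write the difference $g(\theta)-g^{\mathrm{JF}}(\theta)$ as a single expression and then bound it using a Neumann-series argument, exploiting the fact that the $\delta_K(\theta)$-Lipschitz property of $\Phi_K(\cdot,\theta)$ controls the operator norm of its generalized Jacobian $\partial_x\Phi_K(\cdot,\theta)(\widehat{x}_\theta)$.

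First, I would subtract \eqref{eq:gradient_descent_jf} from \eqref{eq:step_deq2} at the point $\widehat{x}_\theta$, which factors as
\begin{equation*}
g(\theta)-g^{\mathrm{JF}}(\theta) = \partial_x\mathcal{L}(\widehat{x}_\theta)^\top\Big([\Id-\partial_x\Phi_K(\cdot,\theta)(\widehat{x}_\theta)]^{-1}-\Id\Big)\,\partial_\theta\Phi_K(\widehat{x}_\theta,\cdot)(\theta).
\end{equation*}
By Assumption~\ref{assumptions1}, $\Phi_K(\cdot,\theta)$ is $\delta_K(\theta)$-Lipschitz with $\delta_K(\theta)<1$. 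Invoking the earlier remark that, for any Lipschitz map, every element of the Clarke generalized Jacobian has operator norm bounded by the Lipschitz constant, any selection $J\in\partial_x\Phi_K(\cdot,\theta)(\widehat{x}_\theta)$ satisfies $\|J\|_2\le\delta_K(\theta)<1$. This guarantees invertibility of $\Id-J$ and validates the Neumann expansion
\begin{equation*}
[\Id-J]^{-1}-\Id = \sum_{p=1}^{\infty} J^{p}.
\end{equation*}

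Next, using the triangle inequality and sub-multiplicativity of $\|\cdot\|_2$, I would bound
\begin{equation*}
\big\|[\Id-J]^{-1}-\Id\big\|_2 \le \sum_{p=1}^{\infty}\|J\|_2^{p} \le \sum_{p=1}^{\infty}\delta_K(\theta)^{p} = \frac{\delta_K(\theta)}{1-\delta_K(\theta)}.
\end{equation*}
Combining this with sub-multiplicativity applied to the factored expression for $g-g^{\mathrm{JF}}$, and using that $\|\partial_x\mathcal{L}(\widehat{x}_\theta)^\top\|_2 = \|\partial_x\mathcal{L}(\widehat{x}_\theta)\|_2$ (since $\mathcal{L}$ is differentiable, this is just a vector norm), yields the claimed inequality.

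The only real subtlety is step one: properly handling the generalized Jacobian. Since $\Phi_K(\cdot,\theta)$ is only assumed Lipschitz (not $C^1$), $\partial_x\Phi_K(\cdot,\theta)(\widehat{x}_\theta)$ is a set and any identity of the DEQ type in \eqref{eq:step_deq2} must be read as holding for a particular selection. I would make explicit that one picks a selection $J\in\partial_x\Phi_K(\cdot,\theta)(\widehat{x}_\theta)$ with $\|J\|_2\le\delta_K(\theta)<1$ so that $\Id-J$ is genuinely invertible, and that the bound on $\|[\Id-J]^{-1}-\Id\|_2$ is uniform over all such selections. Once that is secured, the remainder of the proof is a direct Neumann-series estimate with no further technical obstacles.
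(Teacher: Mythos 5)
Your proposal is correct and follows essentially the same route as the paper: the paper factors the difference as $\partial_x\mathcal{L}(\widehat{x}_\theta)^\top(\Id-\J_\theta^{-1})\partial_\theta\Phi_K(\widehat{x}_\theta,\cdot)(\theta)$, applies sub-multiplicativity, and invokes an appendix lemma whose proof is exactly your Neumann-series bound $\|\Id-(\Id-H)^{-1}\|_2\le \delta_K(\theta)/(1-\delta_K(\theta))$ for $\|H\|_2\le\delta_K(\theta)<1$. Your additional remark on selecting an element of the Clarke generalized Jacobian with norm bounded by the Lipschitz constant is a point the paper handles only implicitly (via its earlier remark following \cite[Proposition~2.6.2]{Clark}), but it does not change the argument.
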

\begin{proof}
The proof provided below relies on standard computations except for the last inequality that comes in applying Lemma~\ref{lem:lipschitz_matrice} of the Appendix with $H = \partial_x\Phi_K(\cdot, \theta)(\widehat{x}_\theta)$ to obtain $||\Id - \J_\theta^{-1}||_2 \leq \frac{\delta_K(\theta)}{1-\delta_K(\theta)}$. From this we get 
\begin{align}
    ||g(\theta) - g^{\mathrm{JF}}(\theta)||_2 &=  ||\partial_x \mathcal{L}(\widehat{x}_{\theta}) (I-\J_\theta^{-1})\partial_\theta \Phi_{K} (\widehat{x}_{\theta},\cdot)(\theta)||_2\\
     &\leq  ||\partial_x \mathcal{L}(\widehat{x}_{\theta})||_2|| (I-\J_\theta^{-1})||_{2}||\partial_\theta \Phi_{K} (\widehat{x}_{\theta},\cdot)(\theta)||_2\\
     &\leq \frac{\delta_K(\theta)}{1-\delta_K(\theta)}||\partial_x \mathcal{L}(\widehat{x}_{\theta})||_2 ||\partial_\theta \Phi_{K} (\widehat{x}_{\theta},\cdot)(\theta)||_2.
\end{align}
\end{proof}%
\noindent If the iterative scheme $\Phi_K$ has Lipschitz constants satisfying $\delta_K \to 0$ as $K \to \infty$ - as with the Picard iterations in Section \ref{sec:convergence_inner_pb} - then Lemma~\ref{th:prop_jf} directly implies that the error between $g$ and $g^{\mathrm{JF}}$ becomes arbitrarily small for large $K$. 

 This suggests that gradient steps at equilibrium of some architectures of \textit{truncated unrolled} schemes can be well approximated by $g^\mathrm{JF}$ provided they are sufficiently deep, eliminating the need to compute and invert $\J_\theta$ in the DEQ setting.

\subsection{Approximation error}  
We complete the result of Lemma~\ref{th:prop_jf} by considering the situation where $\widehat{x}_\theta$ is not exactly known. By Assumption~\ref{assumptions1}, we can use the restart scheme \eqref{eq:restart} in order to construct an approximating sequence of $\widehat{x}_\theta$. The following result gives an error bound on the DEQ gradient step at an exact fixed point, and the JFB gradient step at a point $x_{KT}$ obtained through the \acronym\ scheme. We need the following assumption.
\begin{assumption}\label{assumptions2}
 For every $\theta\in\Theta$, $\partial_\theta \Phi_K(\cdot,\theta)$ is
 locally Lipschitz continuous at $\widehat{x}_{\theta}$,
 i.e., for some $\delta>0$, there exists $L_{\theta}>0$
 such that, for every $x\in\mathbb{R}^n$ satisfying $\|x-\widehat{x}_{\theta}\|_2\leq \delta$, we have
 $$
 \|\partial_{\theta}\Phi_K(x,\theta)-\partial_{\theta}\Phi_K(\widehat{x}_{\theta},\theta)\|_2\le L_{\theta}\|x-\widehat{x}_{\theta}\|_2.$$
\end{assumption}
This asumption hold, for instance, if each step $k\in\NN$
of the algorithm $\varphi_k$ is of class $\mathcal{C}^2$. Indeed, since $\Phi_K$ is the composition of $\mathcal{C}^2$ functions, it is $\mathcal{C}^2$, which implies that $\partial_\theta \Phi_K(\cdot,\theta)$ is $\mathcal{C}^1$ and, therefore, $\partial_x\partial_\theta \Phi_K(\cdot,\theta)$ is continuous and bounded in compact sets. Therefore, the gradient descent algorithm of a $\mathcal{C}^3$
function satisfies this assumption.

\begin{theorem}\label{th:main_th} If $\mathcal{L}$ is twice differentiable and Assumptions~\ref{assumptions1} and \ref{assumptions2} are satisfied, then
    \begin{align}
        \|g(\theta) &- g^{\mathrm{R}}(\theta)\|_2 \leq  \frac{\delta_K}{1-\delta_K} \|\partial_x \mathcal{L}(\widehat{x}_\theta)\|_2\|\partial_\theta \Phi_{K}(\widehat{x}_{\theta},\cdot)\|_{2} \nonumber\\
        &+ \delta_K^TL_{\theta}\|\widehat{x}_{\theta} - x_0\|_2 \|\partial_x \mathcal{L}(\widehat{x}_\theta)\|_2 \nonumber \\
        &+ \delta_K^T\|\widehat{x}_\theta - x_0\|_2
        \|\partial_{xx}\mathcal{L}(\widehat{x}_\theta)\|_2 \| \partial_\theta \Phi_{K}(x_{K(T-1)},\cdot)(\theta)\|_{2}\nonumber\\
        &+ \|\partial_\theta \Phi_K(x_{K(T-1)},\cdot)(\theta)\|_2 o\big( \delta_K^T\|\widehat{x}_\theta - x_0\|_2\big).\label{eq:bound_th2}
    \end{align}
\end{theorem}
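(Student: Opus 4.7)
The plan is to combine Lemma~\ref{th:prop_jf} with a separate estimate comparing the JFB gradient $g^{\JF}$ to the \acronym\ gradient $g^{\mathrm{R}}$, and glue them together via the triangle inequality
$$\|g(\theta)-g^{\mathrm{R}}(\theta)\|_2 \le \|g(\theta)-g^{\JF}(\theta)\|_2 + \|g^{\JF}(\theta)-g^{\mathrm{R}}(\theta)\|_2.$$
The first summand is handled directly by Lemma~\ref{th:prop_jf} and produces the first line of the claimed bound. Everything else goes into estimating the second summand.

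For the second summand, the natural move is to insert the intermediate vector $\partial_x\mathcal{L}(\widehat{x}_\theta)^\top \partial_\theta \Phi_K(x_{K(T-1)},\cdot)(\theta)$ and split
\begin{align*}
g^{\JF}(\theta)-g^{\mathrm{R}}(\theta) &= \partial_x\mathcal{L}(\widehat{x}_\theta)^\top\bigl[\partial_\theta \Phi_K(\widehat{x}_\theta,\cdot)(\theta)-\partial_\theta \Phi_K(x_{K(T-1)},\cdot)(\theta)\bigr] \\
&\quad+ \bigl[\partial_x\mathcal{L}(\widehat{x}_\theta)-\partial_x\mathcal{L}(x_{KT})\bigr]^\top \partial_\theta \Phi_K(x_{K(T-1)},\cdot)(\theta).
\end{align*}
Applying the submultiplicative norm inequality to each piece reduces the task to bounding the two bracketed differences.

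For the first bracket, the plan is to invoke Assumption~\ref{assumptions2}, which guarantees that $\partial_\theta\Phi_K(\cdot,\theta)$ is $L_\theta$-Lipschitz in a neighborhood of $\widehat{x}_\theta$, and then Theorem~\ref{th:banach_picard} to get $\|x_{K(T-1)}-\widehat{x}_\theta\|_2\le \delta_K^{T-1}\|x_0-\widehat{x}_\theta\|_2$; this is absorbed into the $\delta_K^T$ factor of the second line of the bound up to an innocuous constant. Note that for $T$ sufficiently large, the iterate $x_{K(T-1)}$ lies automatically in the Lipschitz neighborhood by contractivity (Assumption~\ref{assumptions1}), so the local estimate applies. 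For the second bracket, since $\mathcal{L}$ is twice differentiable, a first-order Taylor expansion of $\partial_x\mathcal{L}$ at $\widehat{x}_\theta$ gives
$$\partial_x\mathcal{L}(x_{KT})-\partial_x\mathcal{L}(\widehat{x}_\theta) = \partial_{xx}\mathcal{L}(\widehat{x}_\theta)(x_{KT}-\widehat{x}_\theta) + o(\|x_{KT}-\widehat{x}_\theta\|_2),$$
and combining this with $\|x_{KT}-\widehat{x}_\theta\|_2\le \delta_K^T\|x_0-\widehat{x}_\theta\|_2$ (Theorem~\ref{th:banach_picard} once more) yields the third line plus the little-$o$ remainder of the claimed bound.

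The main obstacle is the Taylor expansion step: twice differentiability of $\mathcal{L}$ is needed to extract the linear-plus-little-$o$ structure, and the little-$o$ must be propagated through its product with $\|\partial_\theta\Phi_K(x_{K(T-1)},\cdot)(\theta)\|_2$ while remaining $o(\delta_K^T\|\widehat{x}_\theta-x_0\|_2)$, which is why this term is kept separate in the statement rather than merged into a higher-order constant. The rest is careful bookkeeping of the contractivity estimates furnished by Assumption~\ref{assumptions1} and Theorem~\ref{th:banach_picard}.
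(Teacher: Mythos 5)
Your proposal follows essentially the same route as the paper's proof: the same triangle inequality through $g^{\JF}$, the same insertion of the cross term $\partial_x\mathcal{L}(\widehat{x}_\theta)^\top\partial_\theta\Phi_K(x_{K(T-1)},\cdot)(\theta)$, Assumption~\ref{assumptions2} for the first bracket, and a first-order Taylor expansion of $\partial_x\mathcal{L}$ plus Theorem~\ref{th:banach_picard} for the second. If anything you are slightly more careful than the paper, which applies the local Lipschitz estimate at $x_{KT}$ rather than $x_{K(T-1)}$ to obtain the clean $\delta_K^T$ factor, whereas you correctly note the exponent is $T-1$ and absorb the harmless discrepancy.
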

\noindent By inspecting the bound in \eqref{eq:bound_th2} we have that the first term is the same as the one obtained in Lemma~1 and every other term has a factor $\delta_K^T$. Thus for a contractive algorithm, for which by definition $\delta_K<1$, then as the number of restarts increases every term but the first tend linearly to 0. This result motivates the introduction of Algorithm~\ref{alg:restarted_algorithm} with $T$ and $K$ as large as possible so as to have a minimal error between $g$ and $g^{\mathrm{R}}$. 
\begin{proof}
    By the triangle inequality
    \begin{align}
        \|g(\theta) - g^{\mathrm{R}}(\theta)\| \leq     \|g(\theta) - g^{\mathrm{JF}}(\theta)\| +     \|g^{\mathrm{JF}}(\theta) - g^{\mathrm{R}}(\theta)\|.
    \end{align}
    The first term of the r.h.s. of the inequality has already been computed in Lemma~\ref{th:prop_jf}. We compute the last term as follows
    by successively using: the identity $\partial_x \mathcal{L}(x_{KT})= \partial_x \mathcal{L}(x_{KT}) - \partial_x\mathcal{L}(\widehat{x}_{\theta}) + \partial_x\mathcal{L}(\widehat{x}_{\theta})$; the triangle inequality; and several times Assumptions~\ref{assumptions1} and~\ref{assumptions2}. From Theorem~\ref{th:banach_picard} and Assumption~\ref{assumptions2}, it follows that for any $x_0$ there exists $T$ large enough such that the bound in Assumption~\ref{assumptions2} with $x=x_{KT}$ is satisfied for some $L_\theta > 0$. This yields 
    \begin{align}
        \|g^{\mathrm{JF}}(\theta) &- g^{\mathrm{R}}(\theta)\|_2 \\
        =&  \|\partial_x\mathcal{L}(\widehat{x}_{\theta}) \partial_\theta \Phi_{K}(\widehat{x}_{\theta},\cdot)(\theta) - \partial_x \mathcal{L}(x_{KT}) \partial_\theta \Phi_{K}(x_{K(T-1)},\cdot)(\theta)\|_2 \\
        =&  \|\partial_x\mathcal{L}(\widehat{x}_{\theta})\big( \partial_\theta \Phi_{K}(\widehat{x}_{\theta},\cdot)(\theta) -  \partial_\theta \Phi_{K}(x_{K(T-1)},\cdot)(\theta)\big)\nonumber\\
        &- (\partial_x \mathcal{L}(x_{KT}) - \partial_x\mathcal{L}(\widehat{x}_{\theta})) \partial_\theta \Phi_{K}(x_{K(T-1)},\cdot)(\theta) \|_2  \\
        \leq&  \|\partial_x\mathcal{L}(\widehat{x}_{\theta})( \partial_\theta \Phi_{K}(\widehat{x}_{\theta},\cdot)(\theta) -  \partial_\theta \Phi_{K}(x_{K(T-1)},\cdot)(\theta))\|_2 \nonumber\\
        &+ \| (\partial_x \mathcal{L}(x_{KT}) - \partial_x\mathcal{L}(\widehat{x}_{\theta})) \partial_\theta \Phi_{K}(x_{K(T-1)},\cdot)(\theta) \|_2  \\
        \leq &  \|\partial_x\mathcal{L}(\widehat{x}_{\theta})\|_2\| \partial_\theta \Phi_{K}(\widehat{x}_{\theta},\cdot)(\theta) - \partial_\theta \Phi_{K}(x_{K(T-1)},\cdot)(\theta)\|_2\nonumber\\
        &+ \| \partial_x \mathcal{L}(x_{KT}) - \partial_x\mathcal{L}(\widehat{x}_{\theta})\|_2\| \partial_\theta \Phi_{K}(x_{K(T-1)},\cdot)(\theta) \|_2  \\
        \leq & \|\partial_x\mathcal{L}(\widehat{x}_{\theta})\|_2 L_{\theta}\| \widehat{x}_{\theta} - x_{KT}\|_2\nonumber\\
        &+  \| x_{KT} - \widehat{x}_\theta\|_2\|\partial_{xx}\mathcal{L}(\widehat{x})\|_2  \| \partial_\theta \Phi_{K}(x_{K(T-1)},\cdot)(\theta) \|_2 \nonumber\\
        &+o(\| x_{KT} - \widehat{x}_\theta\|_2)\| \partial_\theta \Phi_{K}(x_{K(T-1)},\cdot)(\theta) \|_2  \\
        \leq & \delta_K^TL_{\theta} \|\partial_x\mathcal{L}(\widehat{x}_{\theta})\|_2 \| \widehat{x}_{\theta} - x_0\|_2\nonumber\\
        &+  \delta_K^T\| x_0 - \widehat{x}_\theta\|_2\|\partial_{xx}\mathcal{L}(\widehat{x})\|_2\| \partial_\theta \Phi_{K}(x_{K(T-1)},\cdot)(\theta) \|_2 \nonumber \\
        &+o\big(\delta_K^T\|\widehat{x}_\theta - x_0\|_2  \big) \|\partial_\theta \Phi_K(x_{K(T-1)},\cdot)(\theta)\|_2.
    \end{align}    
\end{proof}
As an application we specify the bound of Theorem~\ref{th:main_th} when the outer loss is the mean-square error.
\begin{corollary}
        Let $\overline{x} \in \mathbb{R}^\d$ and set $\mathcal{L}(x) = \frac{1}{2}\|x-\overline{x}\|^2$, then
    \begin{align}
        &\|g(\theta) - g^{\mathrm{R}}(\theta)\|_2 \leq \frac{\delta_K}{1-\delta_K} \|\widehat{x}_\theta - \overline{x}\|_2\|\partial_\theta \Phi_{K}(\widehat{x}_{\theta},\cdot)\|_{2} \nonumber \\
        &+ \delta_K^T\|x_0 - \widehat{x}_{\theta}\|_2 \left(  \| \partial_\theta \Phi_{K}(x_{K(T-1)},\cdot)(\theta)\|_{2}  + L_\theta\|\widehat{x}_\theta - \overline{x}\|_2 \right).
    \end{align}
\end{corollary}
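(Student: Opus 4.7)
The plan is to specialize the general bound \eqref{eq:bound_th2} of Theorem~\ref{th:main_th} to the quadratic outer loss $\mathcal{L}(x)=\tfrac{1}{2}\|x-\overline{x}\|^2$ by computing the relevant first- and second-order derivatives explicitly and tracking how the four terms on the right-hand side simplify.

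First, I would evaluate the gradient and Hessian of $\mathcal{L}$. Since $\mathcal{L}$ is smooth quadratic, $\partial_x\mathcal{L}(x)=x-\overline{x}$ and $\partial_{xx}\mathcal{L}(x)=\mathrm{Id}$, so that $\|\partial_x\mathcal{L}(\widehat{x}_{\theta})\|_2=\|\widehat{x}_{\theta}-\overline{x}\|_2$ and $\|\partial_{xx}\mathcal{L}(\widehat{x}_{\theta})\|_2=1$. In particular, the twice-differentiability hypothesis of Theorem~\ref{th:main_th} is trivially satisfied and the bound \eqref{eq:bound_th2} applies.

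Next, I would argue that the residual $o$-term in \eqref{eq:bound_th2} vanishes. Tracing back through the proof of Theorem~\ref{th:main_th}, this term originates from the first-order Taylor expansion of $\partial_x\mathcal{L}$ around $\widehat{x}_{\theta}$, namely $\partial_x\mathcal{L}(x_{KT})-\partial_x\mathcal{L}(\widehat{x}_{\theta})=\partial_{xx}\mathcal{L}(\widehat{x}_{\theta})(x_{KT}-\widehat{x}_{\theta})+o(\|x_{KT}-\widehat{x}_{\theta}\|_2)$. Because the gradient of a quadratic is affine, this expansion is exact and the little-$o$ remainder is identically zero, so the fourth summand of \eqref{eq:bound_th2} disappears.

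Finally, I would substitute the three remaining contributions, then factor the common prefactor $\delta_K^T\|\widehat{x}_\theta-x_0\|_2$ out of the second and third summands and rearrange to obtain the claimed inequality. The main (mild) obstacle is justifying that the residual term is \emph{exactly} zero rather than merely asymptotically small: this relies on the affine character of $\partial_x\mathcal{L}$ for quadratic $\mathcal{L}$, rather than on the generic $\mathcal{C}^2$ smoothness estimate used in the statement of Theorem~\ref{th:main_th}.
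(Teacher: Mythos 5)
Your proposal is correct and follows essentially the same route as the paper: compute $\partial_x\mathcal{L}(x)=x-\overline{x}$ and $\partial_{xx}\mathcal{L}(x)=\mathrm{Id}$ and substitute into the bound of Theorem~\ref{th:main_th}. Your explicit justification that the $o$-term vanishes because $\partial_x\mathcal{L}$ is affine is a welcome detail that the paper's one-line proof leaves implicit, but it does not change the argument.
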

\begin{proof}
If $\mathcal{L}(x) = \frac{1}{2}\|x-\overline{x}\|_2^2$, then $\partial_x \mathcal{L}(x) = x-\overline{x}$ and $\partial_{xx} \mathcal{L}(x) = \Id$. It may also be noted that $\|\partial_x \mathcal{L}(x)\|_2 = \|x-\overline{x}\|_2 = \sqrt{2\mathcal{L}(x)}$.
\end{proof}




\section{Numerical experiments}\label{sec:section4}
This section evaluates the performance of Algorithm~\ref{alg:restarted_algorithm} for inverse problems of the form $y=A\overline{x} + \epsilon$ with known forward operator $A$ and  Gaussian noise $\epsilon$. A first set of experiments is conducted under Assumption~\ref{assumptions1} used to match part of the theoretical analysis. A second set of experiments targets a more complex image restoration scenario, aiming not only to assess state-of-the-art performance in terms of reconstruction but also to evaluate the robustness of the proposed algorithm in a more realistic context.
\subsection{Wavelet denoising}
\noindent \textbf{Forward model} -- We consider a denoising problem (e.g., $A=\Id$) with channel dependent noise, which is defined, for an image $x = (x_{R},x_{G},x_{B})$, as $\epsilon = (\epsilon_R, \epsilon_G, \epsilon_B)$ with independent realisations of white Gaussian noise with respective standard deviation $\sigma_R, \sigma_G$, and $\sigma_B$. In our simulations, the noise standard deviations are assumed unknown.

\noindent \textbf{Energy} -- The estimator $\widehat{x}_\theta$ of $\overline{x}$ is obtained by a variational formulation involving a parametric prior relying on a wavelet transform, denoted $D \in \mathbb{R}^{\d\times \d} $. This leads to
\begin{equation}
\label{eq:deoisingweights}
    \widehat{x}_\theta = \argmin_x \frac{1}{2}\|x - y\|_2^2 + \Vert D x \Vert_{1,2,\theta} 
\end{equation}
where $\Vert \cdot \Vert_{1,2,\theta}$ is a parametric prior imposing regularity on wavelet coefficients. In this work we consider two variants that jointly penalize across bands (horizontal, vertical, diagonal) and/or channels (RGB):
 \begin{enumerate}
     \item $\ell_{2,1}$-norm with weighted bands and channels: 
     \begin{equation}
     \label{eq:priorBC}
        (\forall w\in \mathrm{R}^d) \quad \|w\|_{1,2,\theta_{\textrm{B,C}}} := \sum_j \lambda_j \sum_k\sqrt{\sum_{b,c} \Lambda_{b,c} w_{j,k,b,c}^2}; 
     \end{equation}
     \item $\ell_{2,1}$-norm with weighted bands: 
     \begin{equation}
          \label{eq:priorB}
          (\forall w\in \mathrm{R}^d) \quad \|w\|_{1,2,\theta_{\textrm{B}}} := \sum_j \lambda_j \sum_{k,c}\sqrt{\sum_{b} \Lambda_{b} w_{j,k,b,c}^2}.
     \end{equation}
 \end{enumerate}
In both settings, the parameters $\lambda_\cdot$ and $\Lambda_\cdot$ are positive, furthermore, the indices $j,k,b,c$ respectively denote the scale, position, orientation and channel of a wavelet coefficient $w_{j,k,b,c}$. These two norms can be compactly expressed as
 $\Vert D w \Vert_{1,2,\theta} = \Vert \theta D w \Vert_{1,2}$ where $\theta$ encapsulates both $\Lambda$ and $\lambda$. More precisely, $\theta$ is a linear diagonal operator which acts by multiplying each wavelet coefficient $w_{j,k,b,c}$ by $\lambda_j\sqrt{\Lambda_{b,c}}$.\\

The minimization problem~\eqref{eq:deoisingweights}, can be expressed as the proximity operator of the weighted norm, and, due to orthonormality of $D$, can be equivalently  reformulated as:
 \begin{align}
    \widehat{x}_\theta  &= \prox_{\Vert \theta D \cdot \Vert_{1,2}}(y)\\
    &= D^\top\prox_{\Vert \theta \cdot \Vert_{1,2}}(Dy)\\
    &= D^\top\argmin_w \frac{1}{2}\|w - D y\|_2^2 + \Vert \theta w \Vert_{1,2}\\
    &= {D^\top\argmin_w \frac{1}{2}\|D^\top w - y\|_2^2 + \Vert \theta w \Vert_{1,2}}\\    
        &= D^\top\theta^{-1}\argmin_u \frac{1}{2}\|D^\top \theta^{-1}u - y\|_2^2 + \Vert u \Vert_{1,2}. \label{eq:minfinal_denoising}
\end{align}

\noindent \textbf{Forward-backward iterations} --  In order to estimate $\widehat{x}_\theta$, we focus on the minimization problem formulated in \eqref{eq:minfinal_denoising}, leading to $f_\theta = \frac{1}{2}||D^\top \theta^{-1}\cdot - y||_2^2$ and $g_\theta =\Vert \cdot \Vert_{1,2}$  allowing us to apply elementary forward-backward steps on the wavelet coefficients expressed as:
 \begin{equation}
    u_{k+1} =  \varphi(u_k,\theta) := \prox_{\tau||\cdot||_{1,2}} \left( u_k - \tau \theta^{-1}D (D^\top \theta^{-1} u_k - y) \right) 
     \end{equation}
To insure convergence $\tau < 2/L$ with $L = 1/\min(\theta)^2$, the optimal choice is given by  $\tau^* =  2/(1/\max(\theta)^2 + 1/\min(\theta)^2)$\cite{Sigpro1}. 



\noindent\textbf{Truncated unrolled scheme and hyperparameters to learn -- } This forward-backward iteration leads to the following $K$-times truncated unrolled scheme
 \begin{equation}\label{eq:algo_wav_denoising}
     \Phi_K(x,\theta) = \varphi^K (x,\theta)
 \end{equation}

\noindent Based on \eqref{eq:algo_wav_denoising},  we define the restarted scheme as $\Phi_{K}^T$,  which is guaranteed to converge to $\widehat{x}_\theta$ defined in \eqref{eq:minfinal_denoising} for any initialization $x_0$ as $K$ or $T$ get arbitrarily large (cf. Section~\ref{sec:convergence_inner_pb}). 

We denote $\widetilde{x}_{ \theta,K}^{T} = D^\top  \theta^{-1}   \Phi_{K}^T(x_0, \theta)$ with $x_0 = D y$ the estimate obtained after $\Phi_{K}$ scheme and $T$ restarts.  The hyperparameter to learn are either associated with weighted band and channels \eqref{eq:priorBC} or only weighted bands \eqref{eq:priorB}. 
In our experiments, we consider the  Daubechies-4 wavelet transform \cite{daubechies_ten_1992,mallat_wavelet_2008} and compute 4 levels of detail coefficients. 

\noindent \noindent \textbf{Validity of the assumptions -- } Since $f_\theta$ is strongly convex and $g_\theta$ is convex for all choices of  $\theta\in\Theta$, then $E_\theta$ is strongly convex. By defining a stepsize as $\tau^{[\ell]} = 1.95/L^{[\ell]}$ with $L=1/\min(\Lambda^{[\ell]})$ then $\varphi_k$ is ($\delta < 1$)-Lipschitz continuous.

\noindent \textbf{Outer problem minimization -- }
We consider a training data-set $\mathcal{D} = \{(\overline{x}^{(i)}, y^{(i)})\}_{i\in \mathbb{I}}$  consisting of $\vert \mathbb{I}\vert =600$ pairs of clean and degraded images of size $\d = 256\times 256$ extracted from the Div2K dataset \cite{Agustsson_2017_CVPR_Workshops} with pixel intensities in $[0,1]$. The learning stage aims to minimize the loss $\mathcal{L}$, which is the mean-squared error between $\widetilde{x}_{\theta,K}^{T,(i)}$ and $\overline{x}^{(i)}$ evaluated on a batch of 4 images. The algorithm is run over $E = 4$ epochs, leading to a total of $\frac{600}{4}\times E$ gradient evaluation. We minimize $\mathcal{L}$ using the optimizer ADAM \cite{kingma_adam_2017} with a step-size set to $5.10^{-2}$. 
We extracted 4 images from the testing dataset that were not used for the training stage, in order to evaluate the reconstruction performance on them. 
 

\noindent \textbf{Ensuring positivity constraints} --  All parameters such as $\lambda$ or the weights $\Lambda$ must be positive to ensure the well-posedness of the problem. To ensure positivity, we learn $\widetilde{\lambda} \in \mathbb{R}$ and we always (implicitly) consider $\lambda = \exp(\widetilde{\lambda})\in\mathbb{R}_+$ whenever we have to ensure a positiveness  constraint. The same procedure is applied for the estimation of $\Lambda$.

 
\noindent \textbf{Results: Impact of $K$ and $T$ -- } In this preliminary set of experiment, we evaluate the impact of the number of layer $K$ and the number of restarts $T$.  We thus run four configurations: $(K,T) = (1,1)$, $(K,T) = (1,10)$, $(K,T) = (10,1)$, and $(K,T) = (10,10)$. The configuration $(K,T) = (10,1)$ stands for the standard unrolled neural network strategy while $(K,T) = (10,10)$ fully benefits of the proposed framework. The associated results are presented in Figure~\ref{fig:wavelet-denoising}.
\begin{figure}[htbp]
\centering
    \includegraphics[width=0.75\linewidth]{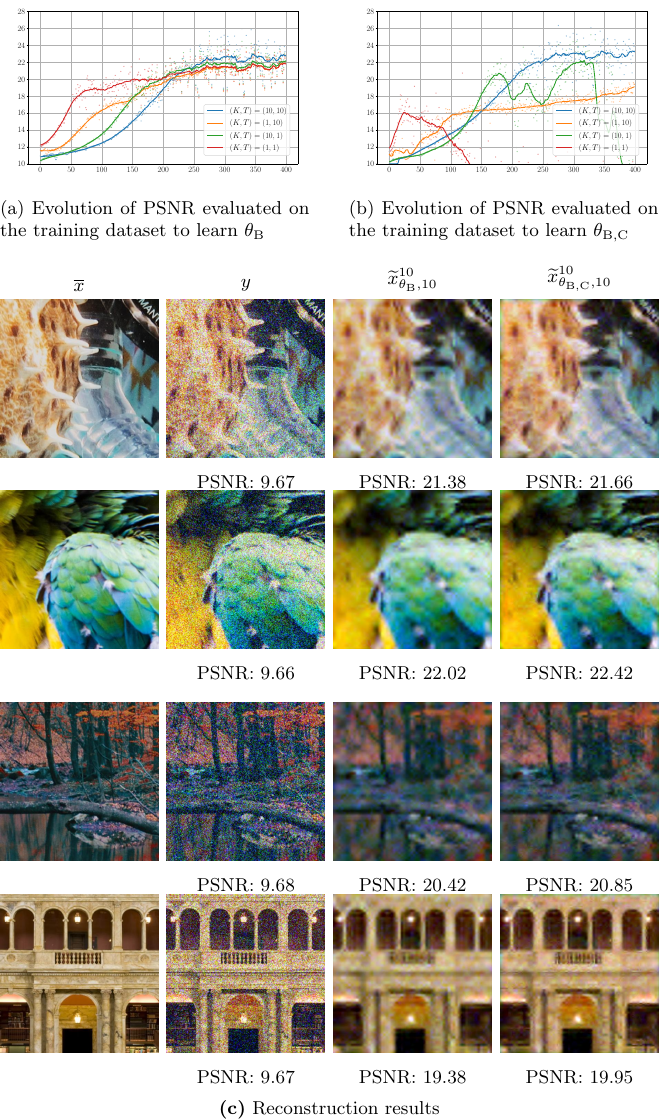}
    \caption{ Wavelet denoising for degraded data with standard deviations set to $(\sigma_{\mathrm{R}}, \sigma_{\mathrm{G}}, \sigma_{\mathrm{B}}) = (0.1,0.25,0.5)$.
    Top: PSNR across gradient descent steps $\ell$ using \acronym\ (i.e. Alg.~\ref{alg:restarted_algorithm}) for different choices of $T$ and $K$. Exact PSNR values are displayed with dots, and solid lines indicate local averages over 25 steps. 
    Bottom: From left to right: ground truth $\overline{x}$, observation $y=\overline{x} + \epsilon$, and reconstructed images $\widetilde{x}_{K=10,\theta^{[\ell = \ell_{\max}]}}^{T=10}$ under priors $\theta_{\mathrm{B}}$ and $\theta_{\mathrm{B,C}}$, including a normalized step variant. }
    \label{fig:wavelet-denoising}
\end{figure}
Figures~\ref{fig:wavelet-denoising}(a) and (b) show the evolution of PSNR for the training dataset as a function of  the number of gradient step. We observe that for both configurations (whether learning  $\theta_{\textrm{B,C}}$ or $\theta_{\textrm{B}}$), the highest PSNR is consistently and more rapidly achieved with $(K,T) = (10,10)$. Moreover, the more complex learning configuration, which stands when weights are associated to both bands and channels, is more challenging to optimize for other settings than $(K,T) = (10,10)$.



Figure~\ref{fig:wavelet-denoising} (c) also displays the restoration results obtained with the configuration $K=T=10$ on four images.  The proposed procedure allows us to provide an efficient estimation of the hyperparameters for both configurations ($\theta_{\textrm{B}}$ or $\theta_{\textrm{B,C}}$) allowing to improve the restoration performance when increasing the number of hyperparameters.

\subsection{Image restoration}
In this section, we relax the strong convexity assumption by considering data-fidelity terms involving a non-injective forward model $A$.

\noindent\textbf{Forward models and data-fidelity terms -- }  We consider two types of  forward linear models: inpainting and debluring. 

For inpainting, $A$ is a diagonal linear operator whose diagonal values $m\in\{0,1\}^\d$ are such that $m_i = 1$ when the pixel is kept and 0 otherwise. This yields to $A\colon x\mapsto  m \odot x$ where $\odot$ denotes the Hadamard product. In this setting $\frac{1}{2}||Ax - y||_2^2$ is not strongly convex.

In the deconvolution setting,  $A$ is a blurring operator defined by convolution with a kernel $h$ such that $A\colon x\mapsto x\star h$. We denote $F$ the Fourier Transform, then $\frac{1}{2}||Ax - y||_2^2 = \frac{1}{2}||(F h) \odot (F x) - F y||_2^2$ which is $\frac{m_h}{2}$-strongly convex with $m_h = \min_\xi |(F h)(\xi)|$. In general $m_h = 0$ (or $m_h\approx 0$) and the data-fidelity term is not strongly convex. We consider an anisotropic channel-dependent blur modeled by a normalized uniform point spread function with a support width of 25 pixels, which respectively averages the red channel horizontally, the green channel vertically and the blue channel diagonally. 


\noindent \textbf{Forward-Backward PnP --}
We further relax the assumption on $\Phi_K$ by considering an implicit prior. More precisely, following ideas exposed in \cite{Ulugbek_bouman_pnp_2023,hurault_proximal_2022,repetti_dual_2022}, we replace the proximal step in the forward-backward iterations by a neural network denoiser denoted $\mathrm{D}_{\Lambda,\sigma}$. In our experiments, we consider a Drunet denoiser\cite{zhang_dpir_j-tpami_2021}, where the noise level $\sigma$ corresponds to the level of regularization which should be applied to the iterates while $\Lambda$ denotes the neural networks parameters. An elementary step of FB-PnP reads
\begin{equation}\label{eq:pnp_iteration}
   (\forall x\in \mathrm{R}^\d) \quad  \varphi(x,\theta) := \mathrm{D}_{\Lambda,\sigma} (x - \tau A^\top (A x - y)). 
 \end{equation}

\noindent\textbf{Hyperparameters to learn -- }  We consider two learning configurations. 

The first one focuses on the learning of the regularization parameter $\sigma$ and the step-size $\tau$ while the denoiser has fixed (already learned) parameters leading to $\theta = (\sigma,\tau)$. More precisely, we use the weights from \cite{tachella_deepinverse_2023}, which were obtained by training over the DPIR dataset \cite{zhang_dpir_j-tpami_2021}. Despite the fact that the noise distribution is unknown, the proposed method offers a simple and efficient framework to learn the hyperparameters.  The tuning of the hyperparameters $\sigma$ and $\tau$ is crucial in order to obtain good estimates.   

In order to further improve the reconstruction results, the second learning configuration aims to learn the denoiser parameters. This allows to further refine the denoiser so that it is jointly adapted to the distribution of images in the dataset and to the forward model under consideration. In this second experiment,  we consider $\theta = \Lambda$ .

\noindent \textbf{Training --} The procedure is the same as for wavelet denoising, except that the learning rate is set to $5 \times 10^{-5}$ in the second configuration. The positivity of $\tau$ and $\sigma$ is enforced in the same manner as in the previous experiment.  In the first configuration the learned parameters are $\theta = (\sigma, \tau)$. For the second configuration  $\theta = \Lambda$ while the values of $(\sigma, \tau)$ are taken from the results of the first configuration.

\noindent\textbf{Results for inpainting -- }
In Fig.~\ref{fig:inpainting90-results} we show the results obtained for an inpainting task where 90\% of the pixels are missing. We compare three types of reconstruction: a) learning  the weight parameters $\theta_{B,C}$ of the wavelet (explicit) prior as presented in the previous section; b) learning the step-size $\tau$ and regularization parameter $\sigma$ of the iteration \eqref{eq:pnp_iteration} with implicit prior $D_{\Lambda,\sigma}$;  c) learning the denoiser $D_{\Lambda,\sigma}$ involved in the iterations \eqref{eq:pnp_iteration}.
For these 3 experiments, we evaluate the impact of the number of layers $K$ and the number of restarts $T$.  We run the same four configurations: $(K,T) = (1,1)$, $(K,T) = (1,10)$, $(K,T) = (10,1)$, and $(K,T) = (10,10)$.

In Fig.~\ref{fig:inpainting90-results} (a)-(c); for all the experiments,  we observe that the blue curve associated with the proposed \acronym\ scheme (i.e., maximal number of restarts and larger number of layers) outperforms all the other methods. Also, the two configurations (orange and green) which involve the same number of $\varphi$ steps for the evaluation of the estimate reach a similar performance, with a slight improvement for the scenario involving a restart procedure (in green). 



For the setting c), the parameters $\tau$ and $\sigma$ are initialized with the values obtained at the end of learning in the setting b). Similarly, $D_\sigma$ is initialized as the denoiser used in the setting b). We observe that learning the denoiser allows to improve the performance compared to the previous setting where the denoiser was fixed. Additionally, the procedure with $K=T=10$ allows us to achieve the best results much faster.

In Fig.~\ref{fig:inpainting90-results}(d), we display the reconstruction results obtain with each of the three configurations when $K=T=10$. We can clearly observe the benefit of learning the denoiser with \acronym\ compared to PnP and obviously compared to a standard wavelet denoising. 


\begin{figure}[htbp]
\centering
    \includegraphics[width=0.8\linewidth]{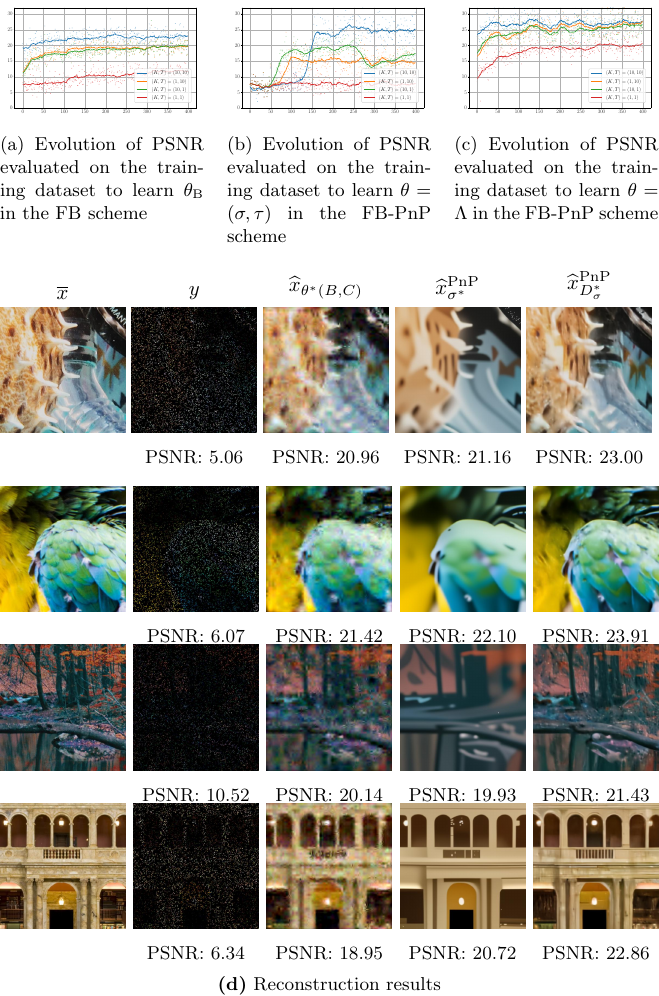}
    \caption{Results for inpainting where 90\% of the pixels are masked and noise standard deviations are set to $\sigma_{\mathrm{R}} = \sigma_{\mathrm{G}} = \sigma_{\mathrm{B}} = 0.05$. See Fig.~\ref{fig:wavelet-denoising} for description of PSNR plots. Three reconstruction results are displayed, a) FB iterations with wavelet prior $\theta(B,C)$, b) FB-PnP with learned $\sigma$ with a fixed pretrained Drunet denoiser, c) FB-PnP with a learned Drunet denoiser. d) Reconstruction estimate at the end of the learning when $K=T=10$. }
    \label{fig:inpainting90-results}
\end{figure}
\noindent\textbf{Results for deblurring --} Similar experiments are made for the deblurring problem and the results are displayed in Fig.~\ref{fig:deblurring-results}. The benefit of the restart for larger value of $K$ and $T$ is not as visible in terms of PSNR as for the inpainting problem but select $K=T=10$ still allows to improve the performance. 


Visually, we may observe in the parrot and forest images that the fixed denoiser created several artifacts which do not appear when the denoiser has been learned with the full potential of \acronym\ (i.e., $K=T=10$) in an unrolled manner.

\section{Conclusion}
In this work, we aimed to train truncated unrolled algorithm using automatic differentiation techniques with theoretical guarantees. Our analysis showed that a truncated neural network can be restarted to infer solutions of \eqref{eq:inner_problem} with fixed parameters. Furthermore, we introduced the \acronym~ procedure to optimize \eqref{eq:outer_problem} in a simple fashion, with learning steps that can be made arbitrarily close to the optimal DEQ steps. To obtain this result, Lipschitz properties of Forward-Backward iterations were leveraged to show the interplay between the depth and the number of restarts in \acronym. In particular, the depth controls the approximation error of a Jacobian-free step, whereas restarting the unrolled neural network allows one to reach the equilibrium point. This theoretical analysis is supported by numerical experiments showing that, even when all assumptions are not satisfied, the \acronym~ procedure allows to go beyond traditional learning schemes and improves performance with respect to either unrolled scheme or PnP strategies. 

\noindent\textbf{Future work.} For clarity of the presentation, the theoretical analysis was restricted to Forward-Backward iterations over a strongly convex functional. This choice allowed us to satisfy Assumption~\ref{assumptions1}, since the elementary steps are Lipschitz contractive with a global Lipschitz constant decreasing linearly with $K$. For non-strongly convex functionals, the previous statement no longer holds; yet, many optimization schemes satisfy that the (pointwise) Lipschitz constant of $K$ iterations tends to 0. Extending the \acronym~ framework to such situations seems promising, because both the equilibrium point and the Jacobian-Free Backpropagation (JFB) error can respectively be guaranteed to be reached by a restart procedure and to decrease to 0. This is also supported by our numerical results on non-strongly convex functionals.

Results on optimization schemes satisfying Assumption~\ref{assumptions2} are, however, less common, and several directions can be considered to find bounds similar to those of Theorem~\ref{th:main_th}. One idea would be to develop smooth regularizations of iterative schemes designed to satisfy Assumption~\ref{assumptions2}. Another approach would be to consider alternative notions of differentiation for non-continuous or set-valued functions—in the same spirit as the Clarke Jacobian of Lipschitz (non-smooth) functions used here—which may still provide useful bounds.

\begin{figure}[htbp]
\centering
\includegraphics[width=0.8\linewidth]{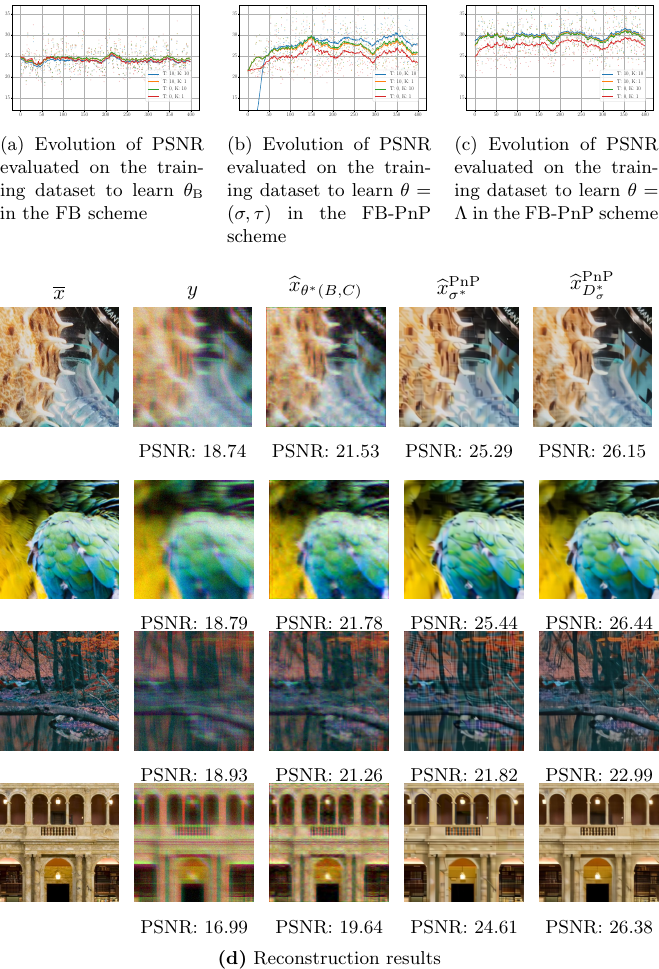}
    \caption{Results for anisotropic deblurring with $A$ a channel-wise anisotropic blur kernel of width 25 and noise standard deviations are set to $\sigma_{\mathrm{R}} = \sigma_{\mathrm{G}} = \sigma_{\mathrm{B}} = 0.05$. See Fig.~\ref{fig:wavelet-denoising} for description of PSNR plots. Three reconstruction results are displayed, a) FB iterations with wavelet prior $\theta(B,C)$, b) FB-PnP with learned $\sigma$ with a fixed pretrained Drunet denoiser, c) FB-PnP with a learned Drunet denoiser. d) Reconstruction estimate at the end of the learning when $K=T=10$.}
    \label{fig:deblurring-results}
\end{figure}

\clearpage
\section{Appendix}
\begin{lemma}[\cite{riesz-nagy}, page 152]\label{lem:lipschitz_matrice}
    Let $H$ a square matrix (or operator $\mathcal{H}\to\mathcal{H}$) with spectral norm $||H||_2 = \omega < 1$, then 
    \begin{equation}
        || I - (I-H)^{-1}||_2 \leq \frac{\omega}{1-\omega}.
    \end{equation}
\end{lemma}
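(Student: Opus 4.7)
The statement is a classical fact about Neumann series, so the plan is to reduce the bound on $\|I-(I-H)^{-1}\|_2$ to a convergent geometric series in the operator norm. The starting point is the observation that since $\|H\|_2=\omega<1$, the operator $I-H$ is invertible and admits the Neumann expansion
\begin{equation}
(I-H)^{-1}=\sum_{k=0}^{\infty}H^{k},
\end{equation}
where the series converges absolutely in the operator norm (this is where we use $\omega<1$, and it is the only non-trivial analytic input).

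From there I would subtract the $k=0$ term to obtain the identity
\begin{equation}
(I-H)^{-1}-I=\sum_{k=1}^{\infty}H^{k}=H(I-H)^{-1}.
\end{equation}
Either expression can be used; the first is slightly more direct. Taking norms and applying the triangle inequality gives $\|I-(I-H)^{-1}\|_2\le\sum_{k=1}^{\infty}\|H^{k}\|_2$, and then submultiplicativity of the spectral norm yields $\|H^{k}\|_2\le\|H\|_2^{k}=\omega^{k}$. Summing the geometric series $\sum_{k\ge 1}\omega^k=\omega/(1-\omega)$ closes the bound.

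\textbf{Main obstacle.} There is essentially no obstacle: the only point requiring care is justifying convergence of the Neumann series and the interchange of norm with the infinite sum, which follow from absolute convergence in a Banach algebra of bounded operators on $\mathcal{H}$. In the finite-dimensional case treated in the paper (matrices acting on $\mathbb{R}^{\d}$) both facts are immediate, so the proof is essentially a three-line computation. Alternatively, one could bypass the series by writing $I-(I-H)^{-1}=-H(I-H)^{-1}$, using $\|(I-H)^{-1}\|_2\le 1/(1-\omega)$ (itself a consequence of the Neumann bound), and multiplying by $\|H\|_2=\omega$; this gives the same constant and is the shortest route if one is willing to take the resolvent bound as a black box from \cite{riesz-nagy}.
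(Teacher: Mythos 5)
Your proposal is correct and follows essentially the same route as the paper: both rely on the Neumann expansion of $(I-H)^{-1}$, the identity $I-(I-H)^{-1}=-H(I-H)^{-1}$ (which you note as the alternative and the paper uses as the main step), and submultiplicativity of the spectral norm together with the geometric series bound $\|(I-H)^{-1}\|_2\le 1/(1-\omega)$. The minor difference of summing $\sum_{k\ge 1}\omega^k$ directly versus factoring out $H$ first is purely cosmetic.
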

\begin{proof}
    Since $||H||_2 < 1$, it follows from Neumann's expansion that
    \begin{equation}
       I - (I-H)^{-1} = I - \sum_{k\in\mathbb{N}} H^k = -\sum_{k=1}^{\infty}H^k = - H\sum_{k=0}^{\infty}H^k = -H(I-H)^{-1}.
    \end{equation}
    By the multiplicative property of the spectral norm $||H_1H_2||_2\leq ||H_1||_2||H_2||_2$ for any $H_1,H_2$ it follows that
    \begin{equation}
        ||I-(I-H)^{-1}||_2 \leq ||H||_2||(I-H)^{-1}||_2 \leq \frac{\omega}{1-\omega}
    \end{equation}
    where we used again Neumann's expansion since $\omega < 1$
    \begin{equation}
        ||(I-H)^{-1}||_2 = ||\sum_{k\in\mathbb{N}} H^k||_2 \leq \sum_{k\in\mathbb{N}} ||H^k||_2 \leq \sum_{k\in\mathbb{N}} \omega^k = \frac{1}{1-\omega}.
    \end{equation}
\end{proof}

\bibliographystyle{plain}

\end{document}